\newcommand{\R}{\mathbb{R}}
\newcommand{\eps}{\varepsilon}
\DeclareMathOperator*{\med}{median} 
\theoremstyle{plain}
\newtheorem{lemma}{Lemma} 
\newtheorem{theorem}{Theorem}
\theoremstyle{definition}
\newtheorem{definition}{Definition}
\begin{document}

\title[Asymptotics of $p$--harmonic functions]{Asymptotic statistical characterizations of $p$--harmonic functions of two variables}

\dedicatory{Dedicated to Professor Lloyd Jackson and his enduring legacy}

\author[D. Hartenstine]{David Hartenstine}
\email{david.hartenstine@wwu.edu}
\address{Department of Mathematics \\
Western Washington University \\
Bellingham, WA ~ 98225}

\author[M. Rudd]{Matthew Rudd}
\email{mbrudd@sewanee.edu}
\address{Department of Mathematics \\
Sewanee: The University of the South \\
Sewanee, TN ~ 37383}

\date{\today}
\subjclass[2010]{Primary: 35J92, 35J70, 35D40}
\keywords{$p$--Laplacian, mean-value property, $p$-harmonic functions, viscosity solutions}
\begin{abstract}
Generalizing the well-known mean-value property of harmonic functions,
we prove that a $p$--harmonic function of two variables satisfies, in a viscosity sense,
two asymptotic formulas involving its local statistics.  Moreover, we show that these asymptotic
formulas characterize $p$--harmonic functions when $1 < p < \infty$.  An example demonstrates
that, in general, these formulas do not hold in a non-asymptotic sense.
\end{abstract}

\maketitle

\begin{section}{Introduction}

A fundamental and fascinating fact about harmonic functions is their characterization by the
mean value property \cite{evans:pde98} : the continuous function $u$ is harmonic in the
domain $\Omega \subset \R^{N}$
if and only if
\begin{equation} \label{mean-vp}
u(x) = \fint_{\partial B_{r}(x)}{ u(s) \, ds } = \fint_{B_r (x)} {
u(y) \, dy } \quad \textrm{for each} \quad x \in \Omega \, ,
\end{equation}
where $B_{r}(x) \Subset \Omega$ is a ball with center $x$ and radius $r > 0$, $\partial B_{r}(x)$
is its boundary, and $\fint_{E}{ f }$ denotes the average of $f$ over the
set $E$.  Ostensibly, identity (\ref{mean-vp}) says nothing about derivatives and
could be studied entirely within the category of continuous functions.  It is the
prototypical \textit{statistical} characterization of
solutions of a PDE, and it is natural to wonder if this is peculiar to Laplace's equation.
In other words, can one characterize solutions of other PDEs in a statistical way that
avoids any explicit mention of derivatives?

Recent work shows that such statistical characterizations exist, in a certain sense,
for $p$--harmonic functions, i.e., solutions of the quasilinear PDE
\begin{equation} \label{p-laplace}
-\Delta_{p}u := - \operatorname{div}{ \left( |Du|^{p-2} Du \right) } = 0 \, , \quad \textrm{for} \quad
1 < p < \infty .
\end{equation}
More precisely, $p$--harmonic functions are usually defined to be
weak solutions of (\ref{p-laplace}); thanks to work by Juutinen et
al. \cite{juutinen:evs01}, however, weak solutions of
(\ref{p-laplace}) are the same as viscosity solutions of
(\ref{p-laplace}).  Viscosity techniques are particularly relevant
to the present work, as Manfredi et al. \cite{manfredi:amv10} used
such methods to prove that the continuous function $u$ is
$p$--harmonic in the domain $\Omega \subset \R^{N}$ if and only if
the functional equation
\begin{equation} \label{manfredi}
u(x) = \frac{\alpha}{2} \left\{ \max_{ \, \overline{B_{\eps}(x) \, } }{ u } +
\min_{ \, \overline{B_{\eps}(x)} \, }{ u } \right\} +
\beta \fint_{B_{\eps}(x)}{ u(y) \, dy } + o( \eps^{2} ) \quad \textrm{as} \quad \eps \to 0
\end{equation}
holds in the viscosity sense for all $x \in \Omega$. The constants $\alpha$ and $\beta$ are
determined by the exponent $p$ and the dimension $N$:
\[
\alpha := \frac{p-2}{p+N} \quad \textrm{and} \quad \beta := \frac{2+N}{p+N} \, .
\]
This characterization also holds for $\infty$--harmonic functions, where
the $\infty$--Laplacian $\Delta_{\infty}$ has the formal definition
\begin{equation} \label{infty-lap}
\Delta_{\infty} u :=
\frac{1}{ |Du|^{2} } \sum_{i,j=1}^{N}{ \frac{\partial u}{\partial x_{i}} \, \frac{\partial u}{\partial x_{j}} \, \frac{\partial^{2} u}{ \partial x_{i} \partial x_{j} } }
\end{equation}
for smooth $u$.

To establish their results, the authors of \cite{manfredi:amv10} combine several interesting
facts.  First, calculating formally yields
\begin{equation} \label{decomp-1}
\Delta_{p} u = |Du|^{p-2} \left( \Delta u + (p-2) \Delta_{\infty} \right) \, ,
\end{equation}
an identity that plays a central role in both \cite{juutinen:evs01} and \cite{manfredi:amv10}.
Using it, Juutinen et al. proved that $u$ is a viscosity solution of (\ref{p-laplace}) if and
only if
\[
-\Delta u - (p-2) \Delta_{\infty} u = 0
\]
in the viscosity sense, about which more will be said below.
Manfredi et al. then invoke the identities
\begin{equation} \label{taylor-2-lap}
u(x) - \fint_{B_{\eps}(x)}{ u(y) \, dy } = -\frac{ \eps^{2} }{ \,
2(N+2) \, } \Delta u(x)+ o( \eps^{2} )
\end{equation}
and
\begin{equation} \label{taylor-infty-lap}
u(x) - \frac{1}{2} \left\{ \max_{ \, y \in \overline{B_{\eps}(x)
\, } }{ u(y) } + \min_{ \, y \in \overline{B_{\eps}(x) \, } }{
u(y) } \right\} = -\frac{ \eps^{2} }{ 2 } \Delta_{\infty} u(x) +
o( \eps^{2} ) \, ,
\end{equation}
valid for smooth functions as $\eps \rightarrow 0$, to obtain
their asymptotic characterization (\ref{manfredi}).  Here and in
what follows, a function is called smooth if it is $C^2$.

The decomposition (\ref{decomp-1}) can be written in various ways,
a fact that we exploit to obtain new statistical characterizations
of $p$--harmonic functions of two variables.  Specifically, if we
define the $1$--Laplacian $\Delta_{1}$ on smooth functions by
\begin{equation} \label{1-harmonic}
\Delta_{1} u :=
| Du | \operatorname{div}{ \left( \frac{ Du }{ \, |Du| \, } \right) } \, ,
\end{equation}
then the formal relationship
\[
\Delta_{1} = \Delta - \Delta_{\infty}
\]
holds and immediately yields two alternatives to (\ref{decomp-1}) :
\begin{equation} \label{decomp-2}
\Delta_{p} u = |Du|^{p-2} \left( \, (p-1) \Delta u + (2-p) \Delta_{1} u \, \right) \, ,
\end{equation}
and
\begin{equation} \label{decomp-3}
\Delta_{p} u = |Du|^{p-2} \left( \, \Delta_{1} u + (p-1) \Delta_{\infty} u \, \right) \, .
\end{equation}
Using these identities and the Taylor approximation
\begin{equation} \label{taylor-1-lap}
u(x) - \med_{s \in \partial B_{\eps}(x)}{ \left\{ u(s) \right\} }
= -\frac{ \eps^{2} }{ 2 } \Delta_{1} u (x) + o( \eps^{2} ) \, ,
\end{equation}
valid for smooth functions $u$ of two variables as $\eps
\rightarrow 0$, we prove the following:
\begin{theorem} \label{main-thm}
Suppose that $1 < p < \infty$ and $\Omega \subset \R^{2}$ is open,
and let $u$ be a continuous function on $\Omega$.  The following
are equivalent:
\begin{enumerate}
\item
$u$ is $p$--harmonic in $\Omega$.
\item
At each $x \in \Omega$, the equation
\begin{equation} \label{fe-1}
u(x) = \left( \frac{2}{p} - 1 \right) \med_{s \in \partial
B_{\eps}(x) }{ \left\{ \, u(s) \, \right\} } + \left( 2 -
\frac{2}{p} \right) \fint_{\partial B_{\eps}(x)}{ u(s) \, ds } +
o( \eps^{2} )\quad \textrm{as} \quad \eps \to 0
\end{equation}
holds in the viscosity sense.
\item
At each $x \in \Omega$, the equation
\begin{equation} \label{fe-2}
u(x) = \frac{1}{p} \med_{s \in \partial B_{\eps}(x)}{ \left\{ \,
u(s) \, \right\} } + \left( \frac{p-1}{2p} \right) \left( \max_{
\, y \in \overline{B_{\eps}(x) \, } }{ \left\{ \, u(y) \, \right\}
} + \min_{ \, y \in \overline{B_{\eps}(x) \, } }{ \left\{ \, u(y)
\, \right\} } \right) + o( \eps^{2} )\quad \textrm{as} \quad \eps
\to 0
\end{equation}
holds in the viscosity sense.
\end{enumerate}
\end{theorem}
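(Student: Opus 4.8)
The plan is to reduce each of~(\ref{fe-1}) and~(\ref{fe-2}) to the single statement that $u$ is a viscosity solution of $-\Delta_{p}u=0$, which is equivalent to~(1) by the theorem of Juutinen, Lindqvist and Manfredi~\cite{juutinen:evs01}: a continuous function is $p$--harmonic if and only if it is a viscosity solution of $-\Delta u-(p-2)\Delta_{\infty}u=0$, and the latter is, by~(\ref{decomp-1}), the usual viscosity reformulation of $-\Delta_{p}u=0$. Since ``holds in the viscosity sense'' means that the asymptotic inequality obtained from~(\ref{fe-1}) (or~(\ref{fe-2})) by replacing $u$ with a $C^{2}$ test function is required to hold whenever that test function touches $u$ from below (for the ``$\geq$'' direction) or from above (for the ``$\leq$'' direction), everything comes down to the asymptotics, for smooth functions, of the right-hand sides of~(\ref{fe-1}) and~(\ref{fe-2}).

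The key computation is this: if $\phi\in C^{2}$ near $x$ and $D\phi(x)\neq 0$, then for each of~(\ref{fe-1}) and~(\ref{fe-2}) one has
\[
\phi(x)-\bigl[\,\text{the right-hand side with }u\text{ replaced by }\phi\,\bigr]=-\frac{\eps^{2}}{2p\,|D\phi(x)|^{p-2}}\,\Delta_{p}\phi(x)+o(\eps^{2})\qquad\text{as }\eps\to0\,.
\]
For~(\ref{fe-1}) I would combine the median expansion~(\ref{taylor-1-lap}) with the elementary circle-average expansion
\[
\phi(x)-\fint_{\partial B_{\eps}(x)}\phi(s)\,ds=-\frac{\eps^{2}}{4}\,\Delta\phi(x)+o(\eps^{2})\,,
\]
which holds in $\R^{2}$ by Taylor's theorem and the symmetry of the circle. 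Writing the coefficients of~(\ref{fe-1}) as $\tfrac{2}{p}-1=\tfrac{2-p}{p}$ and $2-\tfrac{2}{p}=\tfrac{2(p-1)}{p}$ (they sum to $1$, so only $O(\eps^{2})$ survives after subtracting $\phi(x)$), the $\eps^{2}$--terms assemble into $-\tfrac{\eps^{2}}{2p}\bigl((p-1)\Delta\phi(x)+(2-p)\Delta_{1}\phi(x)\bigr)$, and~(\ref{decomp-2}) identifies this bracket with $|D\phi(x)|^{2-p}\Delta_{p}\phi(x)$. For~(\ref{fe-2}) the argument is the same, combining~(\ref{taylor-1-lap}) with~(\ref{taylor-infty-lap}) (the two coefficients $\tfrac1p$ and $\tfrac{p-1}{p}$ again summing to $1$) and invoking~(\ref{decomp-3}) in place of~(\ref{decomp-2}).

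Granting the displayed identity, the equivalences follow at once. Testing~(\ref{fe-1}) (or~(\ref{fe-2})) against a $C^{2}$ function $\phi$ that touches $u$ from below at $x$ with $D\phi(x)\neq0$, the identity shows that the asymptotic inequality $\phi(x)\geq(\text{right-hand side})+o(\eps^{2})$ holds exactly when $\Delta_{p}\phi(x)\leq0$, i.e.\ when the viscosity supersolution inequality $-\Delta_{p}\phi(x)\geq0$ is satisfied; touching $u$ from above yields, in the same way, the viscosity subsolution inequality. Hence each of~(\ref{fe-1}) and~(\ref{fe-2}) ``in the viscosity sense'' asserts precisely that $u$ is a viscosity solution of $-\Delta_{p}u=0$, which by~\cite{juutinen:evs01} is equivalent to~(1); and since~(\ref{fe-1}) and~(\ref{fe-2}) reduce to the very same condition on test functions, (2)$\Leftrightarrow$(3) falls out as well.

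The step I expect to be the real obstacle is the degenerate case $D\phi(x)=0$, where $\Delta_{1}\phi$ and $\Delta_{\infty}\phi$ are undefined and the clean second-order expansions break down: the median, the boundary average, and $\tfrac12(\max+\min)$ over $\overline{B_{\eps}(x)}$ are then governed by the eigenvalues $\lambda_{1}\geq\lambda_{2}$ of the $2\times2$ Hessian $D^{2}\phi(x)$ rather than by $\Delta\phi(x)$ alone, just as for~(\ref{taylor-infty-lap}) at critical points. One has to verify directly that, for such test functions, the asymptotic inequalities defining ``(\ref{fe-1}) (resp.~(\ref{fe-2})) in the viscosity sense'' coincide with the condition imposed in the definition of a viscosity solution of $-\Delta_{p}u=0$ for $1<p<\infty$; this is the same subtlety addressed in~\cite{juutinen:evs01} and~\cite{manfredi:amv10}, and in two dimensions it amounts to an elementary bookkeeping of $\lambda_{1}$ and $\lambda_{2}$ once the non-degenerate case is in hand.
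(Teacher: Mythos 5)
Your argument is correct and is essentially the paper's own proof: combine the median expansion (\ref{taylor-1-lap}), the circle--average expansion, and the midrange expansion (\ref{taylor-infty-lap}) with the decompositions (\ref{decomp-2}) and (\ref{decomp-3}), note that the coefficients sum to $1$, and pass to the viscosity formulation. The degenerate case $D\varphi(x)=0$ that you flag as the main remaining obstacle does not in fact arise, because the paper's Definitions~\ref{sub-super-soln} and~\ref{fe-sub-super} (whose legitimacy rests on \cite{juutinen:evs01}) test only with smooth $\varphi$ satisfying $|D\varphi(x)|\neq 0$ on \emph{both} sides of the equivalence, so no separate bookkeeping at critical points is needed.
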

\noindent The median operator occurring here is defined as expected: if $u$ is continuous on
$\Omega$, $x \in \Omega$, and $\overline{ B_{\eps}(x) } \subset \Omega$,
\[
m = \med_{ \, s \in \partial B_{\eps}(x) \, }{ \left\{ \, u(s) \, \right\} }
\]
if and only if
\[
| \, \left\{ s \in \partial B_{\eps}(x) \, : \, u(s) \geq m \, \right\} \, | =
| \, \left\{ s \in \partial B_{\eps}(x) \, : \, u(s) \leq m \, \right\} \, | \, ,
\]
where $|E|$ is the 1-dimensional Hausdorff measure of the set $E$.
We remark that if $u$ is smooth and $|Du(x)| \ne 0$, then
(\ref{fe-1}) and (\ref{fe-2}) hold in the usual non-viscosity
sense if and only if $\Delta_p u(x) = 0$.  This follows from
Lemmas 1 and 2 below.

Considering (\ref{mean-vp}), it is natural to ask if the formulas (\ref{fe-1}) and (\ref{fe-2}) hold in a non-asymptotic sense.  More precisely, if $u$ is $p$--harmonic in $\Omega$, do the equations
\begin{equation}\label{nonasympfe-1}
u(x) = \left( \frac{2}{p} - 1 \right) \med_{s \in \partial B_{\eps}(x) }{ \left\{ \, u(s) \, \right\} } +
\left( 2 - \frac{2}{p} \right) \fint_{\partial B_{\eps}(x)}{ u(s) \, ds }
\end{equation}

\begin{equation} \label{nonasympfe-2}
u(x) = \frac{1}{p} \med_{s \in \partial B_{\eps}(x)}{ \left\{ \, u(s) \, \right\} } +
\left( \frac{p-1}{2p} \right)
\left( \max_{ \, y \in \overline{B_{\eps}(x) \, } }{ \left\{ \, u(y) \, \right\} } +
\min_{ \, y \in \overline{B_{\eps}(x) \, } }{ \left\{ \, u(y) \, \right\} } \right)
\end{equation}
\noindent
necessarily hold at all $x \in \Omega$ for all $\eps>0$ sufficiently small?
The answer to this question is no, and in
Section~\ref{counterexamples} we provide an example demonstrating
that these equations do not hold in general even for smooth $p$--harmonic functions.

On the way to proving Theorem \ref{main-thm} in Section \ref{proof}, we provide a simple
analytic proof of identity (\ref{taylor-1-lap}).
We should point out, however, that the relationship between median values and the
$1$--Laplacian has appeared before, either explicitly or
implicitly.  In \cite{oberman:cmd04}, for example, Oberman uses a
discrete median scheme of forward Euler type to approximate solutions of the parabolic mean curvature
equation,
\begin{equation} \label{parabolic}
\frac{ \partial u }{ \partial t } - \Delta_{1} u = 0 \quad \textrm{for} \quad t > 0 \, , \quad
u(\cdot,0) = u_{0} \, ,
\end{equation}
in two space dimensions.  Unlike many other proposed algorithms for this equation, Oberman's
median scheme is provably convergent, an easy consequence
of the main theorem in \cite{barles:cas91}.

Kohn and Serfaty \cite{kohn:dcb06} discuss a
different convergent approximation scheme for
the initial--value problem (\ref{parabolic}) that can be described geometrically as follows.
Let $\Gamma(0)$ be a simple closed curve in the plane, let $\Gamma(t)$ be the curve obtained from
$\Gamma(0)$ by letting it evolve by mean curvature for time $t$, and fix a small $\eps > 0$.
The curve $\Gamma( t + \frac{\eps^{2}}{2} )$ is approximately the locus of all centers of circles of
radius $\eps$ with antipodal points on $\Gamma(t)$; one can approximate $\Gamma( t + \frac{\eps^{2}}{2} )$
by tracking the center of a segment of length $2\eps$ as its endpoints traverse the curve $\Gamma(t)$.
This is the basic idea behind our proof of (\ref{taylor-1-lap}), even though Kohn and Serfaty
never mention medians in \cite{kohn:dcb06}.  Related papers
that use similar ideas without explicitly connecting the $1$--Laplacian and median values include,
but are certainly not limited to, \cite{catte:msm95} and  \cite{ruuth:cgm00}.

The present work is actually closely related to the work of
Jackson and it is our pleasure to briefly discuss this connection.
Over the past thirty or so years, viscosity solutions have become
a standard tool in the study of nonlinear PDEs.  However the contemporary viscosity approach is similar
in some ways to the earlier abstract Perron method of Jackson and
Jackson and Beckenbach as in \cite{beckenbach:ssv53},
\cite{jackson:gsf55} and \cite{jackson:sdp58}. In fact, for a
class of second-order elliptic PDEs,
viscosity subsolutions and the subfunctions of Beckenbach and
Jackson are equivalent (see \cite{hartenstine:gvs04}). Furthermore,
Jackson applied this abstract Perron method to obtain existence and
uniqueness results for the minimal surface equation in two
independent variables \cite{jackson:sdp58}; this work is closely related
to ongoing work on $1$--harmonic functions \cite{rudd:mvo}, as the level
sets of $1$--harmonic functions are minimal surfaces (cf. \cite{ziemer:flg99}).

\end{section}

\begin{section}{New results} \label{results}

\begin{subsection}{Definitions}

Before proving Theorem \ref{main-thm}, we review the necessary definitions and related results.

\begin{definition} \label{sub-super-soln}
Suppose that $1 < p < \infty$, and let $\Omega$ be a domain in $\R^{2}$.
\begin{enumerate}
\item The lower semicontinuous function $u$ is {\it
$p$-superharmonic} in $\Omega$ in the viscosity sense if and only
if the equivalent inequalities
\begin{equation} \label{p-superharmonic}
(1-p) \Delta \varphi + (p-2) \Delta_{1} \varphi \geq 0 \quad \textrm{and} \quad
-\Delta_{1} \varphi + (1-p) \Delta_{\infty} \varphi \geq 0
\end{equation}
hold at $x \in \Omega$ for any smooth function $\varphi$ such that
$|D\varphi(x)| \neq 0$ and $u - \varphi$ has a strict minimum at
$x$.

\item The upper semicontinuous function $u$ is {\it
$p$-subharmonic} in $\Omega$ in the viscosity sense if and only if
the equivalent inequalities
\begin{equation} \label{p-subharmonic}
(1-p) \Delta \varphi + (p-2) \Delta_{1} \varphi \leq 0 \quad \textrm{and} \quad
-\Delta_{1} \varphi + (1-p) \Delta_{\infty} \varphi \leq 0
\end{equation}
hold at $x \in \Omega$ for any smooth function $\varphi$ such that
$|D\varphi(x)| \neq 0$ and $u - \varphi$ has a strict maximum at
$x$.

\item $u$ is {\it $p$-harmonic} in $\Omega$ if it is both
$p$-superharmonic and $p$-subharmonic in $\Omega$.

\end{enumerate}

\end{definition}

The legitimacy of this definition follows from \cite{juutinen:evs01} and the formal identities
(\ref{decomp-1}), (\ref{decomp-2}) and (\ref{decomp-3}) above,
as checking $p$--harmonicity in the viscosity sense reduces to evaluating $-\Delta_{p} \varphi$
for smooth functions $\varphi$ away from critical points.  We refer to \cite{juutinen:evs01}
and \cite{manfredi:amv10} for more details.

\begin{definition} \label{fe-sub-super}

Let $1 < p < \infty$, let $\Omega$ be a domain in $\R^{2}$, and consider the equation
\begin{equation} \label{fe}
u(x) = \left( \frac{2}{p} - 1 \right) \med_{s \in \partial
B_{\eps}(x) }{ \left\{ \, u(s) \, \right\} } + \left( 2 -
\frac{2}{p} \right) \fint_{\partial B_{\eps}(x)}{ u(s) \, ds } +
o( \eps^{2} ) \, \quad \textrm{as} \quad \eps \to 0.
\end{equation}

\begin{enumerate}

\item $u$ is a {\it supersolution of (\ref{fe}) in the viscosity
sense} if and only if the inequality
\begin{equation} \label{fe-super}
\varphi(x) \geq \left( \frac{2}{p} - 1 \right) \med_{s \in
\partial B_{\eps}(x) }{ \left\{ \, \varphi(s) \, \right\} } +
\left( 2 - \frac{2}{p} \right) \fint_{\partial B_{\eps}(x)}{
\varphi(s) \, ds } + o( \eps^{2} )\quad \textrm{as} \quad \eps \to
0
\end{equation}
holds at $x \in \Omega$ for any smooth function $\varphi$ such
that $|D\varphi(x)| \neq 0$ and $u - \varphi$ has a strict minimum
at $x$.

\item $u$ is a {\it subsolution of (\ref{fe}) in the viscosity
sense} if and only if the inequality
\begin{equation} \label{fe-sub}
\varphi(x) \leq \left( \frac{2}{p} - 1 \right) \med_{s \in
\partial B_{\eps}(x) }{ \left\{ \, \varphi(s) \, \right\} } +
\left( 2 - \frac{2}{p} \right) \fint_{\partial B_{\eps}(x)}{
\varphi(s) \, ds } + o( \eps^{2} )\quad \textrm{as} \quad \eps \to
0
\end{equation}
holds at $x \in \Omega$ for any smooth function $\varphi$ such
that $|D\varphi(x)| \neq 0$ and $u - \varphi$ has a strict maximum
at $x$.

\item $u$ is a {\it solution of (\ref{fe}) in the viscosity sense}
if and only if it is both a subsolution and a supersolution.

\end{enumerate}
\end{definition}

\end{subsection}

\begin{subsection}{Proof of Theorem \ref{main-thm}} \label{proof}

We begin with asymptotic formulas valid for smooth functions that
will be used to establish our main result.  The following lemma
can be established using Taylor expansion; we omit the routine
proof.

\begin{lemma}
Let $\Omega$ be a domain in $\R^{2}$, let $x \in \Omega$, and let
$\varphi$ be a smooth function on $\Omega$.
Then
\begin{equation}
\varphi(x) - \fint_{\partial B_{\eps}(x)}{ \varphi(s) \, ds } = -
\frac{\eps^{2}}{4} \Delta \varphi (x) + o( \eps^{2} ) \, \quad
\textrm{as} \quad \eps \to 0.
\end{equation}
\end{lemma}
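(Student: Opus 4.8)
The plan is to establish the identity by a second-order Taylor expansion of $\varphi$ at $x$ together with the elementary moment computations for normalized arc length on the unit circle. First I would parametrize $\partial B_\eps(x)$ by $s = x + \eps\omega$ with $\omega$ in the unit circle $S^1 \subset \R^2$, so that
\[
\fint_{\partial B_{\eps}(x)}{ \varphi(s) \, ds } = \fint_{S^1}{ \varphi(x + \eps\omega) \, d\sigma(\omega) } ,
\]
where $d\sigma$ denotes the rotation-invariant probability measure on $S^1$. Since $\varphi$ is $C^2$ in a neighborhood of $x$, Taylor's theorem with the Peano form of the remainder gives
\[
\varphi(x + \eps\omega) = \varphi(x) + \eps\, D\varphi(x)\cdot\omega + \frac{\eps^2}{2}\, \bigl( D^2\varphi(x)\,\omega \bigr)\cdot\omega + R(\eps\omega) ,
\]
where the remainder satisfies $R(h)/|h|^2 \to 0$ as $h \to 0$.

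Next I would integrate this expansion over $S^1$. The linear term drops out because $\fint_{S^1}{ \omega \, d\sigma } = 0$ by the symmetry $\omega \mapsto -\omega$, and for the quadratic term the symmetries of $S^1$ yield $\fint_{S^1}{ \omega_i\omega_j \, d\sigma } = \tfrac12 \delta_{ij}$ in two dimensions: the off-diagonal moments vanish by the reflection $\omega_1 \mapsto -\omega_1$, while the two diagonal moments are equal and add up to $\fint_{S^1}{ |\omega|^2 \, d\sigma } = 1$. Consequently
\[
\fint_{\partial B_{\eps}(x)}{ \varphi(s)\, ds } = \varphi(x) + \frac{\eps^2}{4}\, \Delta\varphi(x) + \fint_{S^1}{ R(\eps\omega)\, d\sigma(\omega) } ,
\]
and rearranging this yields the asserted formula, provided the last term is $o(\eps^2)$.

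The only step requiring a little care — and this is the point I would flag as the (minor) obstacle — is showing that $\fint_{S^1}{ R(\eps\omega)\, d\sigma } = o(\eps^2)$ with just the $C^2$ hypothesis. This follows from the uniformity implicit in the Peano remainder: for each $\delta > 0$ there is $\rho > 0$ with $|R(h)| \le \delta|h|^2$ whenever $|h| \le \rho$, so once $\eps \le \rho$ every point $\eps\omega$ with $\omega \in S^1$ obeys $|R(\eps\omega)| \le \delta\eps^2$ pointwise, whence the averaged remainder is at most $\delta\eps^2$ in absolute value; letting $\delta \to 0$ gives the claim. (If instead one assumes $\varphi \in C^3$, the same conclusion is immediate from the Lagrange form of the remainder and boundedness of the third derivatives near $x$.) Apart from this observation the argument is the routine Taylor computation referred to in the statement, so I do not anticipate any genuine difficulty.
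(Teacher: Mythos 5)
Your proof is correct and is exactly the routine Taylor-expansion argument the paper alludes to (the paper states that Lemma 1 "can be established using Taylor expansion" and omits the proof). The moment computations $\fint_{S^1}\omega\,d\sigma=0$ and $\fint_{S^1}\omega_i\omega_j\,d\sigma=\tfrac12\delta_{ij}$ and your handling of the Peano remainder are all sound, so nothing further is needed.
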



\begin{lemma}
Let $\Omega$ be a domain in $\R^{2}$, let $x = (x_{1},x_{2}) \in
\Omega$, and let $\varphi$ be a smooth function on $\Omega$ with
$|D \varphi(x)| \neq 0$. Then
\begin{equation} \label{taylor-1-lap-copy}
\varphi(x) - \med_{s \in \partial B_{\eps}(x)}{ \left\{ \varphi(s)
\right\} } = -\frac{ \eps^{2} }{ 2 } \Delta_{1} \varphi(x) + o(
\eps^{2} ) \, \quad \textrm{as} \quad \eps \to 0.
\end{equation}
\end{lemma}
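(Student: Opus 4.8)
The plan is to reduce (\ref{taylor-1-lap-copy}) to a one--dimensional problem on the circle and to read the answer off the second--order Taylor polynomial of $\varphi$. Every quantity in (\ref{taylor-1-lap-copy}) is coordinate--free, so after a translation we may take $x = 0$, and after a rotation we may assume $D\varphi(0) = a\,e_1$ with $a := |D\varphi(0)| > 0$ and $e_1 = (1,0)$. Writing $\varphi_{ij}$ for the second partial derivatives of $\varphi$ at $0$, a direct computation from (\ref{infty-lap}) gives $\Delta_\infty\varphi(0) = \varphi_{11}$, so by $\Delta_1 = \Delta - \Delta_\infty$ we have $\Delta_1\varphi(0) = (\varphi_{11} + \varphi_{22}) - \varphi_{11} = \varphi_{22}$; hence it suffices to prove that the median $m_\eps$ of $\varphi$ over $\partial B_\eps(0)$ satisfies $m_\eps = \varphi(0) + \tfrac{\eps^{2}}{2}\varphi_{22} + o(\eps^{2})$. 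Parametrizing $\partial B_\eps(0)$ by $s = \eps\,\omega(\theta)$, $\omega(\theta) = (\cos\theta,\sin\theta)$, Taylor's theorem --- with remainder uniform in $\theta$, since $\varphi \in C^{2}$ --- gives
\[
\varphi(\eps\,\omega(\theta)) - \varphi(0) = a\eps\cos\theta + \tfrac{\eps^{2}}{2}\,Q(\theta) + o(\eps^{2}), \qquad Q(\theta) := \varphi_{11}\cos^{2}\theta + 2\varphi_{12}\cos\theta\sin\theta + \varphi_{22}\sin^{2}\theta .
\]
Since arc length on $\partial B_\eps(0)$ is proportional to $d\theta$, the median $m_\eps$ is the number for which $\big|\{\theta \in [0,2\pi) : \varphi(\eps\,\omega(\theta)) \ge m_\eps\}\big| = \pi$.

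Next I would locate $m_\eps$. Comparing it with the minimum and maximum of $\varphi$ over $\partial B_\eps(0)$ shows $\mu_\eps := m_\eps - \varphi(0) = O(\eps)$, and since $a\eps\cos\theta$ has median $0$ over the circle a short comparison argument improves this to $\mu_\eps = o(\eps)$: otherwise the superlevel set would, to leading order, be an arc $\{\cos\theta \ge b\}$ with $b \ne 0$, whose measure is not $\pi$. Knowing $\mu_\eps = o(\eps)$, for $\eps$ small the superlevel set $E_\eps := \{\theta : \varphi(\eps\,\omega(\theta)) \ge m_\eps\}$ is a single arc: away from $\pm\tfrac\pi2$ the sign of $\varphi(\eps\,\omega(\theta)) - m_\eps$ is that of $\cos\theta$, while near each of $\pm\tfrac\pi2$ the map $\theta \mapsto \varphi(\eps\,\omega(\theta))$ is strictly monotone (its $\theta$--derivative being $\mp a\eps + O(\eps^{2})$ there), so $E_\eps$ coincides with $(-\tfrac\pi2, \tfrac\pi2)$ except for a small shift of each endpoint. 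Geometrically $E_\eps$ is the arc of $\partial B_\eps(0)$ on the positive--gradient side of the level curve $\{\varphi = m_\eps\}$, which meets $\partial B_\eps(0)$ near the two points $(0, \pm\eps)$, and the statement that $m_\eps$ is the median is exactly the condition that these two crossing points be antipodal --- this is where two dimensions are used. Solving $\varphi(\eps\,\omega(\theta)) = m_\eps$ near $\theta = \pm\tfrac\pi2$ by setting $\theta = \pm\tfrac\pi2 + t$ and using $Q(\pm\tfrac\pi2) = \varphi_{22}$ places the two endpoints of $E_\eps$ at $\tfrac\pi2 + \xi_\eps$ and $-\tfrac\pi2 - \xi_\eps$, where $\xi_\eps = \tfrac{1}{a\eps}\big(\tfrac{\eps^{2}}{2}\varphi_{22} - \mu_\eps\big) + o(\eps)$; thus $|E_\eps| = \pi + 2\xi_\eps + o(\eps)$, and imposing $|E_\eps| = \pi$ forces $\mu_\eps = \tfrac{\eps^{2}}{2}\varphi_{22} + o(\eps^{2})$, as required. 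Note that only the value $Q(\pm\tfrac\pi2) = \varphi_{22}$ enters, which is why $\varphi_{11}$ and $\varphi_{12}$ do not appear in the answer.

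The one step that needs care is promoting the endpoint computation to an honest estimate, which I would do by a squeeze. Fix $\delta > 0$. Using the continuity of $Q$ near $\pm\tfrac\pi2$ together with the uniform $o(\eps^{2})$ remainder, one checks that for all sufficiently small $\eps$
\[
\big|\{\theta : \varphi(\eps\,\omega(\theta)) \ge (\tfrac12\varphi_{22} + \delta)\eps^{2}\}\big| < \pi \quad\text{and}\quad \big|\{\theta : \varphi(\eps\,\omega(\theta)) \ge (\tfrac12\varphi_{22} - \delta)\eps^{2}\}\big| > \pi ,
\]
the deficit below $\pi$ (respectively the surplus above $\pi$) being contributed by a sub--arc of length of order $\eps$ near each of $\pm\tfrac\pi2$ on which the superlevel set (respectively its complement) is empty. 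Since $t \mapsto \big|\{\varphi(\eps\,\omega(\cdot)) \ge t\}\big|$ is non--increasing and every median $m_\eps$ satisfies $\big|\{\varphi(\eps\,\omega(\cdot)) \ge m_\eps\}\big| \ge \pi \ge \big|\{\varphi(\eps\,\omega(\cdot)) > m_\eps\}\big|$, the two inequalities above give $|\mu_\eps - \tfrac12\varphi_{22}\eps^{2}| \le \delta\eps^{2}$ for all small $\eps$, and $\delta$ is arbitrary. I expect this localized measure estimate near the two crossing points to be the only genuine obstacle; the remainder is Taylor expansion and the elementary geometry of circular arcs, and the restriction to two variables enters precisely through the equal--arc reformulation of the median condition.
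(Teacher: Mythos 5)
Your proof is correct and follows essentially the same route as the paper's: both rest on the two--dimensional observation that the median level curve must split $\partial B_{\eps}(x)$ into two arcs of equal length --- i.e.\ meet it in (nearly) antipodal points --- so that only the second derivative of $\varphi$ in the direction perpendicular to $D\varphi(x)$ survives, and that is exactly $\Delta_{1}\varphi(x)$. The paper implements this via the Implicit Function Theorem and the exact antipodal identity $M_{\eps} = \varphi(x \pm \eps \mathbf{v}_{\eps})$ followed by Taylor expansion at the two antipodes, whereas you normalize coordinates and run a squeeze on the measures of superlevel sets; the underlying mechanism is the same.
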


\begin{proof}

The Implicit Function Theorem guarantees that, for $\eps > 0$ sufficiently small, the level
sets of $\varphi$ form a one--parameter family of smooth, non--intersecting curves that foliate
the closed ball $\overline{ B_{\eps}(x) } $.  Consequently, the median of $\varphi$ over $\partial B_{\eps}(x)$,
\[
M_{\eps} := \med_{ \, s \in \partial B_{\eps}(x) \, }{ \left\{ \varphi(s) \right\} } \, ,
\]
is the value corresponding to the level set that intersects $\partial B_{\eps}(x)$ in antipodal points;
for each $\eps > 0$, there is a unique
angle $\theta_{\eps} \in [0, 2\pi)$ such that
\begin{equation} \label{antipode}
M_{\eps} =
\varphi( x_1 + \eps \cos{ \theta_{\eps} }, x_2 + \eps \sin{ \theta_{\eps} } ) =
\varphi( x_1 - \eps \cos{ \theta_{\eps} }, x_2 - \eps \sin{ \theta_{\eps} } )  \, .
\end{equation}

Let $\mathbf{v}_{\eps}$ denote the unit vector $(\cos{ \theta_{\eps}}, \sin{ \theta_{\eps}})$,
and define
\[
D\varphi^{\perp}(x) :=  ( - \varphi_{2}(x), \varphi_{1}(x) ) \, .
\]
The derivatives of $\varphi$ below are evaluated
at $x$, which we omit for simplicity.
Taylor expanding about $x$ yields
\begin{equation} \label{taylor1}
M_{\eps} = \varphi( x + \eps \mathbf{v}_{\eps} ) =
\varphi( x ) + \eps D\varphi \cdot \mathbf{v}_{\eps} +
\frac{ \eps^{2} }{2} \mathbf{v}_{\eps}^{\intercal} D^{2}\varphi \mathbf{v}_{\eps} + o( \eps^{2} )
\end{equation}
and
\begin{equation} \label{taylor2}
M_{\eps} = \varphi( x - \eps \mathbf{v}_{\eps} ) =
\varphi( x ) - \eps D\varphi \cdot \mathbf{v}_{\eps} +
\frac{ \eps^{2} }{2} \mathbf{v}_{\eps}^{\intercal} D^{2}\varphi \mathbf{v}_{\eps} + o( \eps^{2} ) \, .
\end{equation}
Since these expressions both equal $M_{\eps}$,
\[
\eps D\varphi \cdot \mathbf{v}_{\eps} = o( \eps^{2} ) \, .
\]
We therefore have
\begin{equation} \label{decomp}
\mathbf{v}_{\eps} = \frac{ D\varphi^{\perp} }{ \, | D \varphi | \, } + \mathbf{w}_{\eps} \, ,
\end{equation}
where
\[
\eps D\varphi \cdot \mathbf{w}_{\eps} = o( \eps^{2} ) \, ,
\]
and we see (among other things) that the sequence $\{
\mathbf{v}_{\eps} \}$ of unit vectors converges:
\[
\mathbf{v}_{\eps} \to \frac{ D \varphi^{\perp} }{ \, | D \varphi |
\, } \quad \textrm{as} \quad \eps \downarrow 0 \, .
\]
Using the decomposition (\ref{decomp}) in the right--hand side of either (\ref{taylor1}) or (\ref{taylor2})
yields (cf. \cite{kohn:dcb06})
\begin{equation}
\varphi(x) - M_{\eps} = - \frac{ \eps^{2} }{2} \frac{(
D\varphi^{\perp})^{\intercal}}{|D \varphi|} \, D^{2}\varphi \,
\frac{D\varphi^{\perp}}{| D \varphi |} + o( \eps^{2} ) = -\frac{
\eps^{2} }{2} \Delta_{1} \varphi + o( \eps^{2} ) \, ,
\end{equation}
proving the lemma.

\end{proof}

With these lemmas, Theorem~\ref{main-thm} is easily established using the same approach as in \cite{manfredi:amv10}: apply the asymptotic formulas for smooth functions to the viscosity formulation.

\begin{proof}

Suppose that $u$ is continuous in $\Omega$ and that $\varphi$ is a
smooth function for which $|D \varphi(x)| \ne 0$ and $u- \varphi$
has a strict minimum at $x \in \Omega.$  Using Lemmas 1 and 2 and
observing that $(2/p -1) + (2-2/p) = 1$, it follows that the first
inequality in (\ref{p-superharmonic}) holds if and only if
(\ref{fe-super}) holds.  Thus $u$ is $p$--superharmonic in the
viscosity sense if and only if it is a viscosity supersolution of
(\ref{fe-1}).  The analogous argument establishes the equivalence
of $p$--subharmonicity and being a subsolution of (\ref{fe-1}).

The equivalence of the first and third statements of the theorem
is proved similarly, using identity (\ref{taylor-infty-lap})
instead of Lemma 1.

\end{proof}

\end{subsection}

\begin{subsection}{Necessity of Asymptotic Nature of Theorem~\ref{main-thm}} \label{counterexamples}

In this section, we present an example to show that (\ref{nonasympfe-1}) and (\ref{nonasympfe-2}) do not hold for $p$--harmonic functions in general.  In fact, these equations do not even necessarily hold for all $\eps>0$ sufficiently small, so that the asymptotic results appearing in Theorem~\ref{main-thm} are, in general, the best available.

For any $1 < p < 2$, the function $u_p (x) = |x|^{(p-2)/(p-1)}$ is
smooth and $p$-harmonic in $\mathbb{R}^2 \setminus \{0\}$, and is
known as the fundamental solution of the $p$-Laplacian (see for example \cite{juutinen:evs01}).  Let $x=(x_1,0)$ where $x_1 >0$ and let $0< \eps < x_1$.  Because $u_p$ is radial and radially decreasing, it is not hard to see that
\begin{equation} \label{fundsolmed}
\med_{\partial B_{\eps}(x)} u_p = (x_1^2 + \eps^2)^{(p-2)/2(p-1)}.
\end{equation}
The mean of $u_p$ on $\partial B_{\eps}(x)$ is
\begin{equation}\label{fundsolmean}
\frac{1}{2 \pi} \int_0^{2 \pi} (x_1^2 + 2 x_1 \eps \cos \theta +
\eps^2)^{(p-2)/2(p-1)} \, d \theta.
\end{equation}
Using (\ref{fundsolmed}) and (\ref{fundsolmean}), (\ref{nonasympfe-1}) at $x$ with $u=u_p$ becomes
\begin{equation}\label{fundsolform-1}
|x_1|^{(p-2)/(p-1)} = \left(\frac{2}{p}-1\right) (x_1^2 + \eps^2)^{(p-2)/2(p-1)} + \left(2 - \frac{2}{p}\right) \frac{1}{2 \pi} \int_0^{2 \pi} (x_1^2 + 2 x_1 \eps \cos
\theta + \eps^2)^{(p-2)/2(p-1)} \, d \theta.
\end{equation}
If (\ref{fundsolform-1}) holds for all $\eps$ sufficiently small we can differentiate it with respect to $\eps$ to obtain
\begin{equation}\label{fundsolform-2}
(2-p) (x_1^2 + \eps^2)^{(p-2)/2(p-1)-1} \eps = \frac{2-2p}{2
\pi}\int_0^{2 \pi} (x_1^2 + 2 x_1 \eps \cos \theta +
\eps^2)^{(p-2)/2(p-1)-1} (x_1 \cos \theta + \eps) \, d \theta.
\end{equation}
Now let $x_1 = 1$ and $p=3/2$.  The last equation is then
\begin{equation}
(1/2)(1+\eps^2)^{-3/2} \eps = \frac{-1}{2 \pi} \int_0^{2
\pi} (1 + 2 \eps \cos \theta + \eps^2)^{-3/2} ( \cos
\theta + \eps) \, d \theta,
\end{equation}
which holds if and only if
\begin{equation}\label{epsintform}
- \eps = \frac{1}{\pi} \int_0^{2 \pi} \left( \frac{1 + 2
\eps \cos \theta + \eps^2}{1 + \eps^2}\right)^{-3/2} (
\cos \theta + \eps) \, d \theta = \frac{1}{\pi} \int_0^{2 \pi} \left(1 + \frac{2 \eps \cos
\theta}{1 + \eps^2} \right)^{-3/2} (
\cos \theta + \eps) \, d \theta.
\end{equation}
Using the binomial formula:
\begin{equation}
\left(1 + \frac{2 \eps \cos
\theta}{1 + \eps^2} \right)^{-3/2} =
1 - \frac{3}{2}
\left(\frac{2 \eps \cos \theta}{1 + \eps^2} \right) +
\frac{15}{8}\left(\frac{2 \eps \cos \theta}{1 + \eps^2}
\right)^2 - \frac{35}{16} \left(\frac{2 \eps \cos \theta}{1 +
\eps^2} \right)^3
\end{equation}
plus higher order terms.  Therefore the integrand in (\ref{epsintform}) is equal to
\begin{equation} \label{asympint}
\cos \theta
- \frac{3 \eps}{1 + \eps^2} \cos^2 \theta + \frac{15}{2}
\frac{\eps^2 \cos^3 \theta}{(1 + \eps^2)^2}-\frac{35}{2} \frac{\eps^3 \cos^4 \theta}{(1 + \eps^2)^3}
+ \eps - \frac{3 \eps^2 \cos \theta}{(1 + \eps^2)} +
\frac{15}{2}\frac{\eps^3 \cos^2 \theta}{(1 + \eps^2)^2}
\end{equation}
plus terms of order 4 and higher.  Using (\ref{asympint}) in the integral in (\ref{epsintform}), noting that odd powers of $\cos \theta$ integrate to zero and recalling that $\int_0^{2
\pi} \cos^2 \theta \, d \theta = \pi$ and $\int_0^{2 \pi} \cos^4
\theta \, d \theta = (3/4)\pi$, we obtain
\begin{equation}
\frac{1}{\pi} \int_0^{2 \pi} \left( \frac{1 + 2 \eps \cos
\theta + \eps^2}{1 + \eps^2}\right)^{-3/2} ( \cos \theta +
\eps) \, d \theta \approx -\eps - (21/8) \eps^3,
\end{equation}
which is strictly less than $-\eps$ if $\eps$ is sufficiently small so that (\ref{epsintform}) does not hold.  As a result, (\ref{fundsolform-1}) cannot hold for all $\eps$ sufficiently small.

The same example can be used to show that (\ref{nonasympfe-2}) also fails in general, even if $\eps$ is small.  Again let $p=3/2$ and $x = (1,0)$, and let $0< \eps < 1.$  The maximum value of $u_p$ on $\overline{B_\eps (x)\,}$ is $1/(1-\eps)$ and the minimum on the same ball is $1/(1+\eps)$.  Using (\ref{fundsolmed}), in this case (\ref{nonasympfe-2}) becomes
\begin{equation}
1 = \frac{2}{3}\left(1 + \eps^2 \right)^{-1/2} + \frac{1}{6}\left(\frac{1}{1-\eps} + \frac{1}{1 + \eps} \right)
\end{equation}
which one can easily see does not hold, even if $\eps>0$ is restricted to being smaller than some $\eps_0$.

\end{subsection}

\end{section}

\begin{section}{Concluding remarks}

The asymptotic characterizations of $p$--harmonic functions in \cite{manfredi:amv10} are
valid in $N$ dimensions.  It would be interesting to extend the results presented here to
higher dimensions.  The only part of the proof of Theorem 1 that requires two dimensions
is Lemma 2.  If an $N$-dimensional version of Lemma 2, perhaps involving the median
on an $(N-1)$--dimensional sphere, were established, new asymptotic statistical
characterizations of $p$-harmonic functions would follow.

We presented an example showing that, in general, only asymptotic characterizations of this type are possible.  However, this is not the case for $p=2$.  A natural question is: do the
equations (\ref{nonasympfe-1}) and (\ref{nonasympfe-2}) hold either
globally or locally for any other values of $p$?
Concrete examples in \cite{rudd:mvo} show that the limiting cases of (\ref{nonasympfe-1})
and (\ref{nonasympfe-2}) can hold when $p=1$,
but more work on this question needs to be done.

Finally, we did not consider the extreme cases $p=1$ and $p=\infty$, although we remark that if $p$ is formally allowed to be $\infty$ in (\ref{fe-2}) the resulting characterization is the same as that in \cite{manfredi:amv10}.

\end{section}

\bigskip
\bigskip

\bigskip
\bigskip

\end{document}